\newtheorem{thm}{Theorem}[section]
\newtheorem{cor}[thm]{Corollary}
\newtheorem{defn}[thm]{Definition}
\newtheorem{lem}[thm]{Lemma}
\newtheorem{prob}[thm]{Problem}
\newtheorem{prop}[thm]{Proposition}
\title{\textbf{\sc On Integer Additive Set-Filtered Graphs}}
\author{N. K. Sudev\footnote{Corresponding author}}
\affil{\small Department of Mathematics\\ Vidya Academy of Science \& Technology \\ Thalakkottukara, Thrissur - 680501, Kerala, India.\\ E-mail: {\em sudevnk@gmail.com}}
\author{K. P. Chithra}
\affil{\small Naduvath Mana, Nandikkara,\\ Thrissur - 680301, Kerala, India.\\ E-mail: {\em srgerminaka@gmail.com}}
\author{K. A. Germina}
\affil{\small Mathematics Research Centre (Kannur University) \\ Mary Matha Arts \& Science College \\  Mananthavady, Wayanad - 670645, Kerala, India.\\ E-mail: {\em srgerminaka@gmail.com}}
\date{}
\begin{document}
\maketitle

\begin{abstract}
Let $\mathbb{N}_0$ denote the set of all non-negative integers and $\mathcal{P}(\mathbb{N}_0)$ be its power set. An integer additive set-labeling (IASL) of a graph $G$ is an injective function $f:V(G)\to \mathcal{P}(\mathbb{N}_0)$ such that the induced function $f^+:E(G) \to \mathcal{P}(\mathbb{N}_0)$ is defined by $f^+ (uv) = f(u)+ f(v)$, where $f(u)+f(v)$ is the sumset of $f(u)$ and $f(v)$. In this paper, we introduce the notion of a particular type of integer additive set-indexers called integer additive set-filtered labeling of given graphs and study their characteristics.
\end{abstract}

\noindent {\small \bf Key Words}: Integer additive set-labeling; integer additive set-filtered labeling; integer additive set-filtered graphs. 
\vspace{0.25cm}

\noindent {\small \bf Mathematics Subject Classification}: 05C78.

\section{Introduction}

For all  terms and definitions, not defined specifically in this paper, we refer to \cite{BM} and \cite{FH} and \cite{DBW}. Unless mentioned otherwise, all graphs considered here are simple, finite and have no isolated vertices.

The {\em sumset} of two non-empty sets $A$ and $B$,  denoted by  $A+B$, is defined as $A + B = \{a+b: a \in A, b \in B\}$. If either $A$ or $B$ is countably infinite, then their sumset $A+B$ is also countably infinite. Hence, all sets we mention here are finite. We denote the cardinality of a set $A$ by $|A|$. 
 
Using the concepts of sumsets, an integer additive set-labeling of a given graph $G$ is defined as follows.

\begin{defn}\label{D-IASL}{\rm
\cite{GA} Let $\mathbb{N}_0$ denote the set of all non-negative integers and $\mathcal{P}(\mathbb{N}_0)$ be its power set. An {\em integer additive set-labeling} (IASL, in short) of a graph $G$ is defined as an injective function $f:V(G)\to \mathcal{P}(\mathbb{N}_0)$ which induces a function $f^+:E(G) \to \mathcal{P}(\mathbb{N}_0)$ such that $f^+ (uv) = f(u)+ f(v),~ uv\in E(G)$. A graph which admits an IASL is called an {\em integer additive set-labeled graph} (IASL-graph).}
\end{defn}

\noindent The notion of an integer additive set-indexers of graphs was introduced in \cite{GA}.

\begin{defn}\label{D-IASI}{\rm
\cite{GA,GSO} An integer additive set-labeling $f:V(G)\to \mathcal{P}(\mathbb{N}_0)$ of a graph $G$ is said to be an {\em integer additive set-indexer} (IASI) if the induced function $f^+:E(G) \to \mathcal{P}(\mathbb{N}_0)$ defined by $f^+ (uv) = f(u)+ f(v)$ is also injective. A graph which admits an IASI is called an {\em integer additive set-indexed graph} (IASI-graph).}
\end{defn}

The existence of an integer additive set-labeling (or integer additive set-indexers) by a given graph was established in \cite{GS0} and the admissibility of integer additive set-labeling (or integer additive set-indexers) by given graph operations and graph products was established in \cite{CGS1}.

\begin{thm}
{\rm \cite{GS0}} Every graph $G$ admits an integer additive set-labeling (or an integer additive set-indexer).
\end{thm}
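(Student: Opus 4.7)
The plan is to construct an explicit labeling using singleton sets, which trivializes the sumset computation.

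First I would enumerate the vertices as $V(G)=\{v_1,v_2,\ldots,v_n\}$ and define $f:V(G)\to\mathcal{P}(\mathbb{N}_0)$ by $f(v_i)=\{a_i\}$ for some carefully chosen distinct non-negative integers $a_1,\ldots,a_n$. Since the $a_i$ are distinct and $f$ maps into singletons, $f$ is automatically injective, so every such choice already gives an integer additive set-labeling: the induced edge-map satisfies $f^+(v_iv_j)=\{a_i\}+\{a_j\}=\{a_i+a_j\}$, which is a well-defined element of $\mathcal{P}(\mathbb{N}_0)$. This settles the IASL part of the claim, essentially for free; in fact the naive choice $a_i=i$ works.

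For the stronger conclusion that $G$ admits an integer additive set-indexer, I would need $f^+$ to be injective on $E(G)$, i.e., $\{a_i+a_j\}\ne\{a_k+a_l\}$ whenever $\{v_i,v_j\}\ne\{v_k,v_l\}$ are distinct edges. Since the edge-labels are singletons, this reduces to requiring that the pairwise sums $a_i+a_j$ (with $i\ne j$) be distinct. A convenient choice is $a_i=2^{i-1}$: if $2^{i-1}+2^{j-1}=2^{k-1}+2^{l-1}$ with $i<j$ and $k<l$, uniqueness of binary representation forces $\{i,j\}=\{k,l\}$. Equivalently, one could invoke the existence of arbitrarily large Sidon (i.e., $B_2$) sets in $\mathbb{N}_0$ and pick $\{a_1,\ldots,a_n\}$ to be such a set; this yields distinct sums over all unordered pairs, which is stronger than what is needed for the edge set of $G$.

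There is essentially no obstacle here: the main observation is that singleton labels collapse the sumset $f(u)+f(v)$ to an ordinary integer sum, so both the injectivity of $f$ and (for the IASI case) the injectivity of $f^+$ become elementary arithmetic conditions on $n$ integers, satisfiable for every $n$. The only thing to double-check is that the graph is simple so that edges correspond to unordered pairs of \emph{distinct} vertices, which is already assumed in the paper's standing conventions.
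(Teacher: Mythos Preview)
Your argument is correct. Labeling the vertices by distinct singletons $\{a_i\}$ is injective, which already yields an IASL; taking $a_i=2^{\,i-1}$ (or any Sidon set) makes all pairwise sums $a_i+a_j$ distinct, so the induced edge map $f^+$ is injective and one obtains an IASI. The appeal to uniqueness of binary expansions is a clean way to verify the Sidon property, and your remark that simplicity of $G$ ensures edges are unordered pairs of distinct vertices is the only hypothesis actually used.

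As for comparison with the paper: note that the paper does \emph{not} supply its own proof of this theorem. The result is merely quoted from \cite{GS0} and used as background, so there is nothing in the present paper to compare your argument against. Your construction is in fact the standard one given in the cited source, where singleton labels (often via a Sidon-type choice of integers) are used to establish existence.
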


The cardinality of the set-label of an element (a vertex or an edge) of a graph $G$ is called the {\em set-indexing number} of that element. An element of $G$ having a singleton set-label is called a {\em mono-indexed element} of $G$.

In this paper, we study the characteristic of graphs which admit a certain type of integer additive set-labeling, called integer additive set-filtered labeling.

\section{Integer Additive Set-Filtered Graphs}

Note that all sets we consider in this paper are non-empty finite sets of non-negative integers. By the term a {\em ground set}, we mean a non-empty finite set of non-negative integers whose subsets are the set-labels of the elements of the given graph $G$. We denote the ground set used for labeling the elements of a graph $G$ by $X$. 

Motivated from the studies about topological IASL-graphs, made in \cite{GS12}, we study a set-labeling of a given graph, in which the collection of all set-labels of the vertices of a given graph forms a filter of the ground set used for the labeling. Let us first recall the definition of the filter of a set.

\begin{defn}\label{D-SF1}{\rm
\cite{KDJ1,JRM} Given a set $X$, a partial ordering $\subseteq$ can be defined on the power set $\mathcal{P}(X)$ by subset inclusion, turning $(\mathcal{P}(X),\subseteq)$ into a lattice. A {\em filter} on $X$, denoted by $\mathcal{F}$, is a non-empty subset of the power set $\mathcal{P}(X)$ of $X$ which has the following properties:

\begin{enumerate}\itemsep0mm
\item[(i)] $X \in \mathcal{F}$.
\item[(ii)] $A,B \in \mathcal{F} \implies A\cap B \in \mathcal{F}$. ($\mathcal{F}$ is closed under finite intersection).
\item[(iii)] $\emptyset \not \in \mathcal{F}$. ($\mathcal{F}$ is a proper filter).
\item[(iv)] $A \in \mathcal{F},  A \subset B, \implies B \in \mathcal{F}$ where $B$ is a non-empty subset of $X$. 
\end{enumerate} }
\end{defn}

In view of Definition \ref{D-SF1}, we define the notion of an integer additive set-filtered labeling of a given graph as follows.

\begin{defn}\label{D-IASFG}{\rm 
Let $X$ be a finite set of non-negative integers. Then, an integer additive set-labeling $f:V(G)\to \mathcal{P}(X)$ is said to be an {\em integer additive set-filtered labeling} (IASFL, in short) of $G$ if $\mathcal{F}=f(V)$ is a proper filter on $X$. A graph $G$ which admits an IASFL is called an {\em integer additive set-filtered graph} (IASF-graph). }
\end{defn}

Note that the null set can not be the set-label of any element of the graph $G$, with respect to an IASL defined on it. 

Does every given graph admit an integer additive set-filtered labeling? If not so, what is  the condition required for a graph to admit an IASFL? As answers to both questions, we establish a necessary and sufficient condition for an IASL $f$ of a given graph $G$ to be an IASFL of $G$ as follows. 

\begin{thm}\label{T-IASFL1}
An IASL $f$ defined on a given graph $G$ with respect to a non-empty ground set $X$ is an integer additive set-filtered labeling of $G$ if and only if the following conditions hold.
\begin{enumerate}\itemsep0mm
\item[(i)]  $0 \in X$.
\item[(ii)] every subset of $X$ containing $0$ is the set-label of some vertex in $G$. 
\item[(iii)] $0$ is an element of the set-label of every vertex in $G$.
\end{enumerate}
\end{thm}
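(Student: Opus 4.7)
The plan is to prove the two implications separately.

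\emph{Sufficiency (the easy direction).} Assuming (i)--(iii), I would first note that (ii) and (iii) together force $f(V)$ to equal $\mathcal{F}_0 := \{A \subseteq X : 0 \in A\}$: the inclusion $f(V) \subseteq \mathcal{F}_0$ is exactly (iii), and the reverse is exactly (ii). It then remains to verify the four filter axioms of Definition \ref{D-SF1} for $\mathcal{F}_0$: $X \in \mathcal{F}_0$ by (i); $0 \in A \cap B$ whenever $0 \in A$ and $0 \in B$, giving intersection-closure; $0 \notin \emptyset$, giving properness; and $0 \in A \subset B \subseteq X$ forces $0 \in B$, giving upward closure. Each of these is a one-line set-theoretic check, so $\mathcal{F}_0$ is a proper filter and $f$ is an IASFL.

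\emph{Necessity (the main direction).} Assume $\mathcal{F} := f(V)$ is a proper filter on $X$. Since $X$ is finite, $\mathcal{F}$ is finite, so by iterated application of the intersection axiom the set $M := \bigcap_{A \in \mathcal{F}} A$ itself lies in $\mathcal{F}$. Properness forces $M \neq \emptyset$, and the upward-closure axiom yields the principal-filter characterisation $\mathcal{F} = \{A \subseteq X : M \subseteq A\}$. The crucial intermediate goal is then to identify $M$ with $\{0\}$; once that is in hand, conditions (i)--(iii) follow directly from the characterisation: $0 \in M \subseteq X$ gives (i); every $f(v) \in \mathcal{F}$ contains $M = \{0\}$, giving (iii); and every subset of $X$ containing $\{0\}$ is in $\mathcal{F} = f(V)$, giving (ii).

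\emph{Expected obstacle.} The delicate step is isolating $M = \{0\}$, since the bare filter axioms alone would admit any non-empty $M \subseteq X$ as the minimal element. My plan is to appeal to the role of $0$ as the additive identity for the sumset operation underlying the IASL: because $A + \{0\} = A$, the element $0$ is the natural base-point distinguishing the ground set in $\mathbb{N}_0$, and the labels forming a filter anchored at $\{0\}$ is the intended convention. Failing an explicit such argument in the paper's framework, the theorem should be read with $0$ replaced throughout by a fixed element $c \in X$, and the proof above goes through verbatim.
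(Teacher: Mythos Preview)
Your sufficiency direction is correct and essentially matches the paper. In the necessity direction you correctly reduce everything to showing that the minimal element $M$ of the (necessarily principal) filter $\mathcal{F}$ equals $\{0\}$, but you then stop short: the phrase ``appeal to the role of $0$ as the additive identity'' is not an argument, and proposing to rewrite the theorem with a generic element $c$ in place of $0$ concedes the point rather than proving it. This is a genuine gap.

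The ingredients you are missing are two standing conventions of the paper. First, the ground set $X$ is declared to be a set ``whose subsets are the set-labels of the \emph{elements} of the given graph $G$,'' so edge labels $f^+(uv)$ must also lie in $\mathcal{P}(X)$. Second, $G$ has no isolated vertices. With these in hand the paper's (tersely worded) argument runs as follows. Since $X\in\mathcal{F}=f(V)$, some vertex $u$ satisfies $f(u)=X$; it has a neighbour $v$, and the edge label $X+f(v)=f^+(uv)$ must be a subset of $X$. For any $a\in f(v)$ this forces $X+a\subseteq X$, hence $X+a=X$ by a cardinality count; but for $a>0$ the element $\max(X)+a$ lies outside $X$, a contradiction. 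Thus $f(v)=\{0\}$, giving $\{0\}\in\mathcal{F}$ and therefore $M=\{0\}$. Your worry is well founded in one sense: without these two conventions counterexamples exist (e.g.\ $X=\{1,2,3\}$ with four vertices labelled by all subsets containing $1$), so the hypotheses are genuinely doing work and you should invoke them explicitly.
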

\begin{proof}
Let $f$ be an IASFL defined on a given graph $G$, with respect to a non-empty set $X$. Then, $\mathcal{F}=f(V)$ is a filter on $X$. Therefore, $X\in \mathcal{F}$. Since, for any non-zero element $a\in X$, the sets $X$ and $X+\{a\}$ are of same cardinality, but indeed $X\subsetneq X+\{a\}$. Hence, $\{0\}$ must also be an element of $\mathcal{F}$. Hence, we notice that $0$ is an element of $X$.  Then, by condition (iv) of Definition \ref{D-SF1}, every subset of $X$ containing $0$ must belong to $\mathcal{F}$. For any two subsets $X_i$ and $X_j$ of $X$, $0\in X_i,\; 0\in X_j\implies 0\in X_i\cap X_j$ and hence $X_i\cap X_j$ also belongs to $\mathcal{F}$. If possible, let a set-label $X_i$ of a vertex $v_i$ of $G$ does not contain $0$. Then, $\{0\} \cap X_i=\emptyset$, which can not be the set-label of any vertex of $G$, contradicting the fact that $\mathcal{F}$ is a filter on $X$. Hence, no subset of $X$ which does not contain $0$, belongs to $\mathcal{F}$.

Conversely, assume that the set-label of every vertex of $G$ contains $0$ and every subset of $X$ containing $0$ is the set-label of some vertex of $G$. Since $0\in X, ~ X\in \mathcal{F}$. If $X_i$ and $X_j$ are the set-labels of two vertices in $G$, then both $X_i$ and $X_j$ contain the element $0$ and hence $X_i\cap X_j$ also contains $0$. Therefore, by the assumption, $X_i\cap X_j$ is also the set-label of some vertex in $G$. That is, $X_i, X_j \in \mathcal{F} \implies X_i\cap X_j \in \mathcal{F}$. As the set-label $X_i$ of any vertex $v_i$ of $G$ contains $0$, then every super set $X_j$ of $X_i$ also contains the element $0$. Therefore, by the hypothesis, $X_j$ is also the set-label of some vertex of $G$. That is, $X_i\in \mathcal{F}, ~ X_i \subset X_j \implies X_j\in \mathcal{F}$. Therefore, $\mathcal{F}$ is a filter on $X$. Hence, $f$ is an IASFL on $G$.
\end{proof}

\noindent From the above theorem we notice that all graphs do not possess IASFLs. Hence, a characterisation of the graphs that admit IASFLs arouses much interest. In view of Theorem \ref{T-IASFL1}, we now proceed to find the characteristics and properties of the graphs which admit IASFLs.

\noindent The following results is are immediate consequences of Theorem \ref{T-IASFL1}.

\begin{cor}
If a graph $G$ admits an IASFL, then $G$ has $2^{|X|-1}$ vertices.
\end{cor}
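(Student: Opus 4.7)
The plan is to read off the corollary as a direct counting consequence of Theorem \ref{T-IASFL1}, combined with the fact that an IASL is by definition an injective map on vertices. The three conditions of Theorem \ref{T-IASFL1} together pin down exactly which subsets of $X$ appear as vertex set-labels: they are precisely the subsets of $X$ that contain $0$.

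More concretely, I would argue as follows. First, by condition (iii) of Theorem \ref{T-IASFL1}, every set-label of a vertex of $G$ is a subset of $X$ containing $0$, so $f(V(G))$ is contained in the family $\mathcal{F}_0=\{A\subseteq X : 0\in A\}$. Second, by condition (ii), every member of $\mathcal{F}_0$ is actually realized as the set-label of some vertex, so $\mathcal{F}_0\subseteq f(V(G))$. Hence $f(V(G))=\mathcal{F}_0$.

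Next I would count $\mathcal{F}_0$. Since condition (i) guarantees $0\in X$, the map $A\mapsto A\setminus\{0\}$ is a bijection from $\mathcal{F}_0$ onto $\mathcal{P}(X\setminus\{0\})$, whose cardinality is $2^{|X|-1}$. Finally, because $f$ is injective by Definition \ref{D-IASL}, distinct vertices receive distinct set-labels, so $|V(G)|=|f(V(G))|=|\mathcal{F}_0|=2^{|X|-1}$.

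There is really no hard step here; the corollary is essentially a bookkeeping exercise once Theorem \ref{T-IASFL1} is in hand. The only point to be careful about is invoking the injectivity of $f$ (from the IASL definition) to conclude $|V(G)|=|f(V(G))|$, rather than just $|V(G)|\geq |f(V(G))|$, so I would make that explicit in the write-up.
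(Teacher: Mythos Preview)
Your argument is correct and follows essentially the same route as the paper: both proofs identify the vertex set-labels with the subsets of $X$ containing $0$ and then count these. The only differences are cosmetic---you count via the bijection $A\mapsto A\setminus\{0\}$ onto $\mathcal{P}(X\setminus\{0\})$, whereas the paper sums $\sum_{i=0}^{n-1}\binom{n-1}{i}=2^{n-1}$---and your explicit invocation of the injectivity of $f$ to pass from $|f(V(G))|$ to $|V(G)|$ is a point the paper leaves tacit.
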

\begin{proof}
Note that $0\in X$ and let $|X|=n$. The number of $r$-element subsets of $X$ with a common element $0$ is $\binom{|X|-1}{r-1}$. Therefore, the number of subsets of $X$ containing the element $0$ is $\sum\limits_{i=0}^{n-1}\binom{n-1}{i}=2^{n-1}$. This completes the proof.
\end{proof}

\begin{cor}\label{C-IASFL2}
If a given graph $G$ admits an IASFL $f$, then only one vertex of $G$ can have a singleton set-label.
\end{cor}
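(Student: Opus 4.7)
The plan is to combine the injectivity of an IASL with the structural consequences of Theorem \ref{T-IASFL1}. Specifically, any IASL $f$ is by Definition \ref{D-IASL} an injective function from $V(G)$ into $\mathcal{P}(\mathbb{N}_0)$, so distinct vertices receive distinct set-labels.

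Next I would invoke condition (iii) of Theorem \ref{T-IASFL1}: since $f$ is assumed to be an IASFL, the element $0$ belongs to the set-label of every vertex of $G$. Consequently, if a vertex $v$ of $G$ has a singleton set-label $\{a\}$, then that singleton must contain $0$, forcing $a=0$ and hence $f(v)=\{0\}$.

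Combining these two observations finishes the argument. The only singleton subset of $X$ which contains $0$ is $\{0\}$ itself, and by the injectivity of $f$ at most one vertex of $G$ can be assigned this particular set-label. Therefore at most one vertex of $G$ has a singleton set-label, which is precisely the claim of Corollary \ref{C-IASFL2}.

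There is essentially no obstacle here; the proof is a one-line consequence of Theorem \ref{T-IASFL1}(iii) together with the injectivity clause built into Definition \ref{D-IASL}. The only subtlety worth stating explicitly is why the singleton must be $\{0\}$ rather than any other singleton in $\mathcal{P}(X)$, and this is immediate from condition (iii) of Theorem \ref{T-IASFL1}.
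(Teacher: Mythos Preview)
Your proof is correct. Both you and the paper conclude that the only admissible singleton set-label is $\{0\}$ and then (implicitly in the paper, explicitly in your write-up) use the injectivity of $f$ to bound the number of such vertices by one. The route differs slightly in how the first step is justified: the paper argues directly from the filter axioms of Definition~\ref{D-SF1}, observing that if some $\{a\}$ with $a\neq 0$ were a set-label then $\{0\}\cap\{a\}=\emptyset$ would lie in $\mathcal{F}$, contradicting properness; you instead invoke condition~(iii) of Theorem~\ref{T-IASFL1}, which already guarantees $0\in f(v)$ for every vertex $v$. Your approach is arguably the more natural one for a corollary of that theorem, since it uses the theorem rather than re-deriving part of it from the filter definition. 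One small point you might add for completeness: Theorem~\ref{T-IASFL1}(ii) ensures that $\{0\}$ actually \emph{is} the set-label of some vertex, so the bound ``at most one'' is in fact attained, giving exactly one mono-indexed vertex.
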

\begin{proof}
Let $G$ be an IASF-graph. Then, by Theorem \ref{T-IASFL1}, $\{0\}$ is a set-label of some vertex in $G$. Let $a$ be a non-zero element in $X$. If $\{a\}$ is the set-label of some vertex of $G$, then the set $\{0\}\cap \{a\}=\emptyset$ must belong to $\mathcal{F}=f(V)$, which is a contradiction to Condition (iii) of Definition \ref{D-SF1}. Therefore, only one vertex of $G$ can have a singleton set-label. (That is, the only possible singleton set-label in $\mathcal{F}$ is $\{0\}$).
\end{proof}

Next, we establish the relation between the collection of the set-labels of vertices and the collection of the set-labels of the edges of an IASF-graph $G$ in the following result.

\begin{prop}\label{P-IASFL-ev}
If $f$ is an IASFL of a graph $G$, then $f^+(E(G))\subseteq f(V(G))$.
\end{prop}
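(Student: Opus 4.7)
The plan is to reduce the claim to Theorem \ref{T-IASFL1}(ii): if I can show, for every edge $uv \in E(G)$, that $f^+(uv)$ is a subset of the ground set $X$ and that $0 \in f^+(uv)$, then Theorem \ref{T-IASFL1}(ii) immediately produces a vertex $w$ with $f(w) = f^+(uv)$, yielding $f^+(uv) \in f(V(G))$. Ranging over all edges then gives the stated inclusion.

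The containment of $0$ is the easy half and follows from Theorem \ref{T-IASFL1}(iii): since $f$ is an IASFL, $0 \in f(u)$ and $0 \in f(v)$, so $0 = 0+0$ lies in the sumset $f(u)+f(v) = f^+(uv)$. As a useful by-product, the same observation yields $f(u) \cup f(v) \subseteq f^+(uv)$, since $f(u)+\{0\} = f(u)$ and $\{0\}+f(v) = f(v)$; thus each edge label automatically dominates the labels of its endpoints.

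The step I expect to be the main obstacle is verifying $f^+(uv) \subseteq X$; in general the sumset of two subsets of $X$ need not lie back in $X$ (for instance $\{0,1\}+\{0,2\} = \{0,1,2,3\}$ escapes $X = \{0,1,2\}$). To dispose of this, I would appeal to the standing convention that the ground set $X$ is chosen large enough to accommodate every element label of $G$, so that the induced edge labels, as sumsets of vertex labels in $\mathcal{P}(X)$, still sit in $\mathcal{P}(X)$; this is implicit in what it means for $f$ to be an IASL relative to the ground set $X$. With this secured, both hypotheses of Theorem \ref{T-IASFL1}(ii) are met for $f^+(uv)$, the sought vertex $w$ is produced, and the proof is complete.
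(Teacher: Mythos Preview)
Your proof is correct and follows essentially the same route as the paper's: both argue that $0\in f(u)$ and $0\in f(v)$ force $0\in f^+(uv)$, and then invoke Theorem~\ref{T-IASFL1}(ii) to realise $f^+(uv)$ as some vertex label. The only difference is that you explicitly flag the containment $f^+(uv)\subseteq X$, which the paper silently absorbs into its stated convention that the ground set $X$ is one ``whose subsets are the set-labels of the elements of the given graph $G$'' (i.e., of both vertices and edges).
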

\begin{proof}
If $u$ and $v$ are any two adjacent vertices of the IASF-graph $G$, then $f(u)$ and $f(v)$ contains $0$ and hence, being the sumset of $f(u)$ and $f(v)$, the set-label $f^+(uv)$ also contains the element $0$. Since every subset of $X$ containing $0$ is the set-label of some vertex in $G$, the set label of the edge $uv$ will also be a set-label of some vertex in $G$. Therefore, $f^+(E)\subseteq f(V)$.
\end{proof}

\noindent The following theorem is a consequence of Theorem \ref{T-IASFL1}.

\begin{thm}\label{T-IASFL2}
If a given graph $G$ admits an integer additive set-filtered labeling $f$, then every element of the collection $\mathcal{F}=f(V(G))$ belongs to some finite chain of sets in $\mathcal{F}$ of the form  $\{0\} =f(v_1)\subset f(v_2)\subset f(v_3) \subset \ldots\ldots \subset f(v_r)=X$. 
\end{thm}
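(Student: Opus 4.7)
The plan is to leverage the characterisation given in Theorem \ref{T-IASFL1}: under an IASFL, the collection $\mathcal{F}=f(V(G))$ coincides precisely with the family of all subsets of $X$ containing the element $0$. Since $\{0\}\in \mathcal{F}$ and $X\in\mathcal{F}$, it suffices to show that an arbitrary $A\in\mathcal{F}$ can be joined to $\{0\}$ on one side and to $X$ on the other by a strictly increasing chain of sets, each of which also contains $0$ (and hence lies in $\mathcal{F}$).

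The construction I would carry out is the obvious one. Given any $A\in\mathcal{F}$, write $A\setminus\{0\}=\{a_1,a_2,\ldots,a_k\}$ and $X\setminus A=\{b_1,b_2,\ldots,b_m\}$, where $k+m=|X|-1$. Define
\[
A_1=\{0\},\quad A_{i+1}=A_i\cup\{a_i\}\ \text{for}\ 1\le i\le k,\quad A_{k+1+j}=A_{k+1}\cup\{b_1,\ldots,b_j\}\ \text{for}\ 1\le j\le m.
\]
Then $A_{k+1}=A$ and $A_{k+m+1}=X$, each successive inclusion is strict by construction, and $0\in A_t$ for every $t$.

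The verification step is then immediate: by Theorem \ref{T-IASFL1}, each set $A_t$, being a subset of $X$ that contains $0$, is the set-label of some vertex $v_t$ of $G$. Thus there is a chain
\[
\{0\}=f(v_1)\subset f(v_2)\subset\cdots\subset f(v_{k+1})=A\subset\cdots\subset f(v_{k+m+1})=X
\]
of members of $\mathcal{F}$ containing the prescribed element $A$. Relabelling $r=k+m+1$ gives the form stated in the theorem.

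There is no real obstacle here; the entire content of the result is repackaged from Theorem \ref{T-IASFL1}. The only point that deserves explicit mention in the write-up is the appeal to condition (ii) of Theorem \ref{T-IASFL1} to ensure that each intermediate set $A_t$, not just the endpoints $\{0\}$ and $X$, is actually realised as the set-label of some vertex of $G$; without that condition the chain would merely exist in $\mathcal{P}(X)$ rather than in $\mathcal{F}=f(V(G))$.
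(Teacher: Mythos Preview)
Your proof is correct and follows essentially the same approach as the paper: both arguments invoke Theorem~\ref{T-IASFL1} to identify $\mathcal{F}$ with the family of all subsets of $X$ containing $0$, and then observe that any such set sits inside a maximal chain from $\{0\}$ up to $X$. Your version is in fact more explicit than the paper's, since you construct the chain element-by-element and point out precisely where condition~(ii) of Theorem~\ref{T-IASFL1} is needed to guarantee each intermediate set lies in $\mathcal{F}$, whereas the paper's argument is somewhat sketchier on this point.
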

\begin{proof}
Let $f$ be an IASFL defined on a graph $G$ and $\mathcal{F}$ be the collection of all set-labels of the vertices in $G$. Then, by Theorem \ref{T-IASFL1}, both $\{0\}$ and $X$ are in $\mathcal{F}$. Since every set-label in $\mathcal{F}$ contains $0$, $\{0\}$ is the subset of all set-labels in $\mathcal{F}$. Since $\mathcal{F} \subseteq \mathcal{P}(X)$, $X$ is the maximal set in $\mathcal{F}$ containing all sets in $\mathcal{F}$. Since $\mathcal{F}$ is a filter on $X$, if a subset $X_i$ of $X$ belongs to $\mathcal{F}$ implies every subset of $X$ containing $X_i$ is also in $\mathcal{F}$. Therefore, there exists some finite sequence $\{0\}\subset \ldots\ldots X_i\subset X_j \ldots\ldots X$ of subsets of $X$ in $\mathcal{F}$. Therefore, every set-label in $\mathcal{F}$ is contained in some finite chain of subsets of $X$ whose least element is $\{0\}$ and the maximal element is $X$.
\end{proof}

We have already identified the number of vertices required for a graph to admit an IASL with respect to a given ground set $X$. In this context, it is interesting to examine certain structural properties of a graph that admit an IASFL.  Hence, we have 

\begin{thm}\label{C-IASFL1}
If a graph $G$ admits an integer additive set-filtered labeling, with respect to a non-empty ground set $X$, then $G$ must have at least $2^{|X|-2}$ pendant vertices that are incident on a single vertex of $G$.
\end{thm}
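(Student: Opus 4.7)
The plan is to identify a large family of vertices whose only possible neighbour in $G$ is the vertex labelled $\{0\}$, which will force each of them to be a pendant vertex attached to that common vertex. By Theorem~\ref{T-IASFL1}, the family of set-labels $\mathcal{F}=f(V(G))$ consists exactly of the subsets of $X$ containing $0$, so there is a unique vertex $v_0$ with $f(v_0)=\{0\}$. I set $n=|X|\ge 2$ (otherwise the bound is vacuous) and let $m=\max X$, noting $m\ge 1$.

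Next, I would focus on those vertices whose set-label contains the maximum element $m$. Suppose $v$ has label $A$ with $m\in A$, and $v$ is adjacent to some vertex $u$ with label $B$. Proposition~\ref{P-IASFL-ev} gives $f^+(uv)=A+B\in f(V)\subseteq \mathcal{P}(X)$, so in particular $A+B\subseteq X$. Since $m\in A$, every $b\in B$ satisfies $m+b\in X$, and the maximality of $m$ forces $b=0$. Thus $B=\{0\}$, i.e., $u=v_0$. So any vertex whose label contains $m$ can be adjacent only to $v_0$.

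The counting step is then immediate: the subsets of $X$ containing both $0$ and $m$ are in bijection with subsets of $X\setminus\{0,m\}$, giving $2^{n-2}$ such labels, and each is the label of a distinct vertex (none of which is $v_0$ itself, since $\{0\}$ does not contain $m$). Because $G$ has no isolated vertices, each of these $2^{n-2}$ vertices must actually be adjacent to $v_0$, and by the previous step only to $v_0$, making each one a pendant vertex incident on the common vertex $v_0$.

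The only delicate point is the appeal to Proposition~\ref{P-IASFL-ev}, which tacitly uses that every edge-label lies in $\mathcal{P}(X)$; once this containment is granted, the rest is a clean extremality-and-counting argument driven entirely by the maximum of $X$.
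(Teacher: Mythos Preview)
Your proof is correct and follows essentially the same approach as the paper: identify the maximal element of $X$, argue that any vertex whose label contains it can only be adjacent to the vertex labelled $\{0\}$, and count the $2^{|X|-2}$ such labels. Your version is in fact slightly more careful, explicitly invoking the absence of isolated vertices and making the containment $A+B\subseteq X$ precise via Proposition~\ref{P-IASFL-ev}, whereas the paper leaves these points implicit.
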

\begin{proof}
Let $f$ be an IASFL defined on a given graph $G$. Then by Theorem \ref{T-IASFL1}, every subset of $X$ containing the element $0$ must belong to $\mathcal{F}$. Let $x_l$ be the maximal element of $X$. Then, for any non-zero element $x$ in $X$, $x+x_l\not\in X$.  
Therefore, if $X_l$ is a subset of $X$ containing $x_l$, then the vertex having $X_l$ as its set-label can not be adjacent to any vertex of $G$ other than the one that has the set-label $\{0\}$. Hence, all the subsets of $X$ containing $x_l$, including $X$ itself, can be adjacent only to the vertex having the set-label $\{0\}$. Note that the number of subsets of $X$ containing $0$ and $x_l$ is $2^{|X|-2}$. Therefore, the minimum number of pendant vertices in $G$ is $2^{|X|-2}$.
\end{proof}

Figure \ref{G-IASFG1} elucidates an IASF-graph with $2^{|X|-2}$ pendant vertices incident on a single vertex, where $X=\{0,1,2,3,4\}$.

\begin{figure}[h!]
\centering
\includegraphics[width=0.75\linewidth]{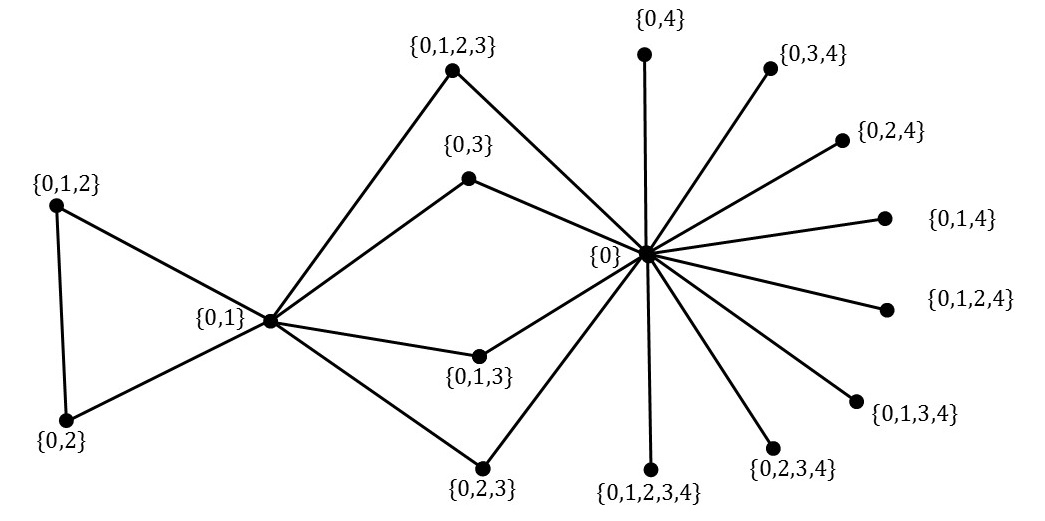}
\caption{An example to an IASF-graph}
\label{G-IASFG1}
\end{figure}

In view of the discussions we have made so far, we notice the following.

\begin{enumerate}\itemsep0mm
\item The existence of an IASFL is not a hereditary property. That is, an IASFL of a graph need not induce an IASFL for all of its subgraphs.
\item For $n\ge 3$, no paths $P_n$ admits an IASFL. 
\item No cycles admit IASFLs and as a result neither Eulerian graphs nor Hamiltonian graphs admit IASFLs.
\item Neither complete graphs nor complete bipartite graphs admit IASFLs. For $r>2$, complete $r$-partite graphs also do not admit IASFLs. 
\item Graphs having odd number of vertices never admits an IASFL.
\end{enumerate}

Another important property of IASFLs is that the existence of an IASFL is a {\em monotone} property. That is, removing any non-leaf edge of an IASFL graph preserves the IASFL of that graph.

\section{Relation Between Different IASLs}

In this section, let us verify the relation between an IASFL of a graph $G$ with certain other types of IASLs of $G$. First recall the definition of an exquisite IASL of a given graph $G$.

\begin{defn}{\rm
\cite{GS12} An {\em exquisite integer additive set-labeling} (EIASL, in short) is defined as an integer additive set-labeling $f:V(G)\to \mathcal{P}(\mathbb{N}_0)$ with the induced function $f^+:E(G) \to \mathcal{P}(\mathbb{N}_0)$ defined by $f^+ (uv) = f(u)+ f(v),~ uv\in E(G)$, holds the condition $f(u),f(v)\subseteq f^+(uv)$ for all adjacent vertices $u, v\in V(G)$. }
\end{defn}

The following theorem is a necessary and sufficient condition for an IASL of a graph $G$ to be an EIASL of $G$.

\begin{thm}\label{T-EIASL}
{\rm \cite{GS12}} Let $f$ be an IASL of a given graph $G$. Then, $f$ is an EIASL of $G$ if and only if $0$ is an element in the set-label of every vertex in $G$. 
\end{thm}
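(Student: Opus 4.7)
The plan is to prove both directions by exploiting the structure of sumsets of non-negative integers, particularly the role of the minimum element.

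For the sufficiency direction, I would assume that every vertex set-label contains $0$, and show the inclusion $f(u), f(v) \subseteq f^+(uv)$ for any adjacent pair $u,v$. The argument is one line: for any $a \in f(u)$, since $0 \in f(v)$, we have $a = a + 0 \in f(u)+f(v) = f^+(uv)$, so $f(u)\subseteq f^+(uv)$, and symmetrically $f(v)\subseteq f^+(uv)$.

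For the necessity direction, assume $f$ is an EIASL, and fix an arbitrary vertex $u$. Since the graph has no isolated vertices (standing assumption of the paper), $u$ has some neighbour $v$, and the EIASL condition gives $f(u)\subseteq f(u)+f(v)$. The key step is to look at $a_{\min} := \min f(u)$. Then $a_{\min}$ must be expressible as $a' + b$ with $a'\in f(u)$ and $b\in f(v)$. Because all sets consist of non-negative integers, $a'\le a_{\min}$, and minimality of $a_{\min}$ in $f(u)$ forces $a'=a_{\min}$, hence $b=0$, so $0\in f(v)$. Applying the same argument with $u$ and $v$ interchanged yields $0\in f(u)$. Since $u$ was arbitrary, every vertex set-label contains $0$.

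I expect no serious obstacles; the only subtle point is the appeal to the no-isolated-vertices hypothesis in the necessity direction (otherwise an isolated vertex could carry any label). The argument uses only the order structure of $\mathbb{N}_0$, so both implications are short once the minimum-element trick is identified.
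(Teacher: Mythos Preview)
Your argument is correct in both directions; the minimum-element trick is the standard way to extract $0$ from the sumset containment, and you correctly invoke the no-isolated-vertices hypothesis. One minor streamlining: once you have shown that any neighbour $v$ of $u$ satisfies $0\in f(v)$, you are already done, since every vertex is somebody's neighbour; the explicit interchange of $u$ and $v$ is not needed.

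However, there is nothing in this paper to compare your proof against. Theorem~\ref{T-EIASL} is stated here as a citation from \cite{GS12} and is used as a black box; the present paper gives no proof of it. So your proposal stands on its own as a valid proof of the quoted result, but any comparison of approach would have to be made against the original paper \cite{GS12}, not against this one.
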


Invoking Theorem \ref{T-EIASL}, we establish the following relation between an IASFL and an exquisite IASL of a given graph $G$.

\begin{prop}
Every IASFL of a graph $G$ is also an exquisite IASL of $G$.
\end{prop}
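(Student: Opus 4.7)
The plan is to chain the two characterisation theorems already established in the paper. Specifically, Theorem~\ref{T-IASFL1} gives that any IASFL $f$ of $G$ forces $0$ to belong to $f(v)$ for every vertex $v\in V(G)$ (this is condition~(iii) of that theorem), while Theorem~\ref{T-EIASL} asserts that the presence of $0$ in every vertex set-label is precisely the necessary and sufficient condition for an IASL to be an EIASL. So the proof essentially amounts to invoking these two results back-to-back.

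Concretely, I would start by letting $f$ be an arbitrary IASFL of $G$, with ground set $X$. By Theorem~\ref{T-IASFL1}(iii), for every vertex $v \in V(G)$ we have $0 \in f(v)$. I would then note that this is exactly the hypothesis required by Theorem~\ref{T-EIASL}, and conclude that $f$ is an EIASL of $G$. For the reader's convenience I might also sketch the direct reason this forces the EIASL condition: for any edge $uv$, since $0\in f(v)$ we have $f(u) = f(u)+\{0\}\subseteq f(u)+f(v)=f^+(uv)$, and symmetrically $f(v)\subseteq f^+(uv)$, which is precisely the defining condition of an exquisite IASL.

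There is no real obstacle here; the statement is essentially a corollary that packages the relationship between the two earlier characterisations. The only thing to be careful about is not to over-claim the converse, since an EIASL need not be an IASFL (it does not in general require that every subset of $X$ containing $0$ be realised as a vertex set-label, nor that $f(V(G))$ be closed under intersection). Hence the proposition is strictly a one-way implication, and I would make a brief remark to that effect after the proof to forestall confusion.
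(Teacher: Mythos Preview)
Your proposal is correct and follows essentially the same route as the paper's own proof: invoke Theorem~\ref{T-IASFL1}(iii) to obtain $0\in f(v)$ for every vertex $v$, then apply Theorem~\ref{T-EIASL} to conclude that $f$ is an EIASL. The additional direct verification of $f(u),f(v)\subseteq f^+(uv)$ and the remark on the failure of the converse are sound elaborations that the paper also makes (the latter in the paragraph immediately following the proof).
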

\begin{proof}
Let $f$ be an IASFL of a given graph $G$. Then, by Theorem \ref{T-IASFL1}, the set-label of every vertex of $G$ contains $0$. Then by theorem \ref{T-EIASL}, $f$ is also an exquisite IASL of $G$.
\end{proof}

It is to be noted that, for an exquisite IASL $f$ of a graph $G$, $f(V)$ need not contain all the subsets of the ground set $X$ containing $0$. Therefore, every exquisite IASL of a graph $G$ need not be an IASFL of $G$.

Figure \ref{G-EIASL1} depicts a topological IASI of a graph $G$  with respect to the ground set $X=\{0,1,2,3,4\}$, which is not an IASFL of $G$.

\begin{figure}[h!]
\centering
\includegraphics[width=0.65\linewidth]{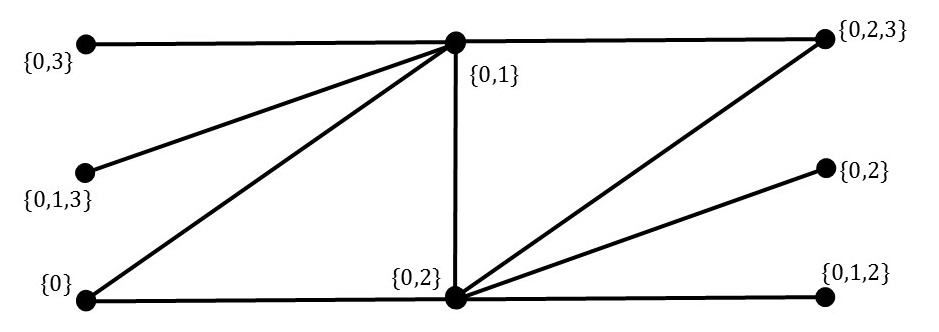}
\caption{An example to a  strong IASL of $G$ which is not an IASFL of $G$.}
\label{G-EIASL1}
\end{figure}

Let us now consider the notions of integer additive set-graceful graphs and integer additive set-sequential graphs, which are defined as follows. 

\begin{defn}{\rm
\cite{GS14,GS15} Let $f:V(G)\to \mathcal{P}(X)-\{\emptyset\}$ be an IASL defined on a graph $G$. Then, $f$ is called an {\em integer additive set-graceful labeling} (IASGL) of $G$ if  $f^{+}(E(G))=\mathcal{P}(X)-\{\emptyset,\{0\}\}$ and $f$ is called an {\em integer additive set-sequential labeling} (IASSL) of $G$ if  $f(V(G))\cup f^{+}(E(G))=\mathcal{P}(X)-\{\emptyset,\{0\}\}$. }
\end{defn} 

The following result checks whether an IASFL of a given graph $G$ can be an IASGL of the graph $G$.

\begin{prop}
No IASFL defined on a given graph $G$ is an IASGL of $G$.
\end{prop}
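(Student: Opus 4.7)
The plan is to derive a contradiction by comparing what sets must appear as edge labels under each of the two labelings. Suppose, for contradiction, that some $f$ is simultaneously an IASFL and an IASGL of a graph $G$. The IASGL condition requires $f^{+}(E(G))=\mathcal{P}(X)-\{\emptyset,\{0\}\}$, so in particular every non-empty subset of $X$ other than $\{0\}$ must occur as the set-label of some edge of $G$.

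On the other hand, since $f$ is an IASFL, Theorem \ref{T-IASFL1} tells us that $0$ belongs to the set-label of every vertex of $G$. Consequently, for any edge $uv \in E(G)$, the sumset $f^{+}(uv)=f(u)+f(v)$ contains $0+0=0$, so $0$ lies in every edge set-label. This can also be seen directly from Proposition \ref{P-IASFL-ev}, which gives $f^{+}(E(G))\subseteq f(V(G))$, combined with the fact that every set in $f(V(G))$ contains $0$.

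The contradiction is now immediate as soon as $X$ has a non-zero element: if $a \in X$ with $a\neq 0$, then $\{a\}\in \mathcal{P}(X)-\{\emptyset,\{0\}\}$, so by the IASGL hypothesis $\{a\}$ must be the set-label of some edge; but $0\notin\{a\}$, contradicting the conclusion above. The case $|X|=1$ cannot arise for an IASFL on a graph with no isolated vertices, since by the corollary to Theorem \ref{T-IASFL1} the graph would have only $2^{0}=1$ vertex.

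There is no real obstacle; the proof is a one-line incompatibility between the ``every edge label contains $0$'' forced by an IASFL and the ``every non-$\{0\}$ subset of $X$ appears as an edge label'' demanded by an IASGL. The only small care needed is to note explicitly that $X$ contains a non-zero element, which follows from the vertex-count corollary and the standing assumption that $G$ has no isolated vertices.
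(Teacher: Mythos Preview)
Your proof is correct and follows essentially the same route as the paper: both argue that under an IASFL every edge set-label contains $0$ (you derive this directly from Theorem~\ref{T-IASFL1} and also cite Proposition~\ref{P-IASFL-ev}, while the paper uses Proposition~\ref{P-IASFL-ev}), and hence $f^{+}(E(G))$ cannot exhaust $\mathcal{P}(X)-\{\emptyset,\{0\}\}$. Your extra care in explicitly ruling out the degenerate case $|X|=1$ is a nice touch that the paper omits.
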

\begin{proof}
Let $f$ be an IASFL defined on $G$. By Proposition \ref{P-IASFL-ev}, $f(E(G))\subseteq f(V(G))$. Hence, set-labels of all edges of $G$ also contain the element $0$. That is, any subset $X_r$ of $X$ that does not contain $0$ will not be in $f(E(G))$. Therefore, $f(E(G))\neq \mathcal{P}(X)-\{\{0\},\emptyset\}$. Hence, $f$ is not an IASGL of $G$. 
\end{proof}

\noindent The following results can also be proved in a similar manner. 

\begin{prop}
No IASFL defined on a given graph $G$ is an IASSL of $G$.
\end{prop}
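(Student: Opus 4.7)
The plan is to prove the statement by contradiction, closely mirroring the argument just used for the IASGL case but pushing it slightly further, since the IASSL condition is weaker and thus needs a sharper witness. I would assume for contradiction that some IASFL $f$ of $G$ is simultaneously an IASSL, and then exhibit an obstruction to the defining equality $f(V(G))\cup f^{+}(E(G))=\mathcal{P}(X)-\{\emptyset,\{0\}\}$.

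First, I would invoke Theorem \ref{T-IASFL1} to conclude that $\{0\}\in f(V(G))$ and, more importantly, that every set-label of a vertex of $G$ contains the element $0$. Next, I would apply Proposition \ref{P-IASFL-ev} to deduce $f^{+}(E(G))\subseteq f(V(G))$, which gives $f(V(G))\cup f^{+}(E(G))=f(V(G))$. Consequently, every set belonging to $f(V(G))\cup f^{+}(E(G))$ contains $0$.

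The contradiction then admits two independent witnesses, either of which is sufficient. On the one hand, $\{0\}$ itself lies in $f(V(G))$ and hence in the left-hand side of the purported equality, whereas it is explicitly excluded from the right-hand side. On the other hand, for any non-zero $a\in X$ (which exists since $G$ has no isolated vertices, forcing $|X|\ge 2$ in view of the corollary on vertex counts), the singleton $\{a\}$ is an element of $\mathcal{P}(X)-\{\emptyset,\{0\}\}$ but cannot belong to $f(V(G))\cup f^{+}(E(G))$, because it fails to contain $0$. Either observation violates the required equality, completing the proof.

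I do not anticipate any genuine obstacle: the argument is a near-immediate consequence of Theorem \ref{T-IASFL1} together with Proposition \ref{P-IASFL-ev}, and the structure exactly parallels the proof of the preceding proposition on IASGLs. The only point requiring a brief remark is ensuring that a non-zero element of $X$ is available if one prefers the second witness, which is automatic under the conventions already in force in the paper.
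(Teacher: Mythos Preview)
Your proposal is correct and follows essentially the same route as the paper's proof: both arguments observe that every set-label of a vertex or edge of an IASF-graph contains $0$, so $f(V(G))\cup f^{+}(E(G))$ cannot coincide with $\mathcal{P}(X)-\{\emptyset,\{0\}\}$. Your version is slightly more detailed in that you route the edge-label claim through Proposition~\ref{P-IASFL-ev} and supply two explicit witnesses (the set $\{0\}$ on the left and a singleton $\{a\}$ on the right), whereas the paper only uses the latter type of witness implicitly.
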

\begin{proof}
We have already proved that the set-labels of all elements of an IASF-graph $G$ contain the element $0$. Therefore, the set $f(V)\cup f^+(E)$ contains only those subsets of $X$ which contain $0$. That is, $f(V(G))\cup f^{+}(E(G))\ne \mathcal{P}(X)-\{\emptyset,\{0\}\}$. Hence, $f$ is not an IASSL of $G$.
\end{proof}

Another important IASL known to us, is a topological IASL, which is defined in \cite{GS13} as follows.

\begin{defn}{\rm
\cite{GS13} An integer additive set-labeling $f:V(G)\to \mathcal{P}(X)-\{\emptyset\}$  is called a {\em topological integer additive set-labeling} (TIASL) of $G$ if  $f(V(G))\cup \{\emptyset\}$ is a topology of $X$.}
\end{defn}

Can an IASFL of a given graph $G$ be a topological IASL of $G$? A relation between an IASFL and an TIASL of a graph $G$ is established in the following result.

\begin{prop}
Every IASFL of a graph $G$ is also a topological IASL of $G$.
\end{prop}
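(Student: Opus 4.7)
The plan is to invoke the characterisation given in Theorem \ref{T-IASFL1} to pin down exactly which subsets of $X$ appear as set-labels under an IASFL, and then verify the three topology axioms directly against that description. The key observation is that when $f$ is an IASFL, conditions (ii) and (iii) of Theorem \ref{T-IASFL1} together say that $f(V(G))$ is precisely the collection of all subsets of $X$ that contain the element $0$. This is a very rigid description, and once it is available the topology axioms essentially fall out for free.

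More concretely, I would proceed as follows. First, let $\mathcal{T}=f(V(G))\cup\{\emptyset\}$. Note that $\emptyset\in\mathcal{T}$ by construction, and since $0\in X$, the set $X$ itself contains $0$ and therefore lies in $f(V(G))\subseteq\mathcal{T}$. Second, for closure under finite intersection, take any $A,B\in\mathcal{T}$. If either is $\emptyset$, the intersection is $\emptyset\in\mathcal{T}$. Otherwise $A,B\in f(V(G))$, so $0\in A$ and $0\in B$ by Theorem \ref{T-IASFL1}(iii), whence $0\in A\cap B$; since $A\cap B\subseteq X$ contains $0$, Theorem \ref{T-IASFL1}(ii) gives $A\cap B\in f(V(G))\subseteq\mathcal{T}$. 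Third, for closure under arbitrary union, take any family $\{A_i\}_{i\in I}\subseteq\mathcal{T}$. If all $A_i=\emptyset$, the union is $\emptyset\in\mathcal{T}$. Otherwise some $A_{i_0}$ contains $0$, so $\bigcup_i A_i$ is a subset of $X$ containing $0$, and again Theorem \ref{T-IASFL1}(ii) places it in $f(V(G))\subseteq\mathcal{T}$.

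Having verified the three axioms, $\mathcal{T}=f(V(G))\cup\{\emptyset\}$ is a topology on $X$, so $f$ is a TIASL of $G$ in the sense of the definition recalled just before the proposition. I do not anticipate any real obstacle: the whole content of the result sits in the characterisation of Theorem \ref{T-IASFL1}, and everything else is a routine check. The only thing that needs slight care is the arbitrary-union case, where one must treat the degenerate situation in which every member of the union is $\emptyset$ separately, to ensure we do not claim $0$ belongs to an empty union.
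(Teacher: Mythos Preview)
Your proposal is correct and follows essentially the same approach as the paper: both arguments set $\mathscr{T}=f(V(G))\cup\{\emptyset\}$, use the characterisation from Theorem~\ref{T-IASFL1} that $f(V(G))$ consists exactly of the subsets of $X$ containing $0$, and then verify the topology axioms directly. Your treatment is in fact slightly more careful, since you explicitly handle the degenerate cases where $\emptyset$ appears in an intersection or where every member of a union is $\emptyset$, whereas the paper glosses over these.
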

\begin{proof}
Let $f$ be an IASFL of a given graph $G$, with respect to a non-empty set $X$. Then $\mathcal{F}=f(V(G))$ is a filter on $X$. Let $\mathscr{T}=\mathcal{F}\cup \{\emptyset\}$. To show that $f$ is a TIASL of $G$, we need to show that $\mathscr{T}$ is a topology on $X$. Since $X\in \mathcal{F}$, we have $\emptyset, X \in \mathscr{T}$. Since $X$ is a finite set and $\mathcal{F}$ contains all subsets of $X$ consisting of $0$, the union of any number of elements of $\mathscr{T}$ is a set containing the element $0$ and hence belongs to $\mathscr{T}$. Similarly, the intersection of any two sets in $\mathcal{F}$ contains at least one element $0$ and hence the intersection of any number of elements in $\mathscr{T}$ is also in $\mathscr{T}$. Therefore, $\mathscr{T}$ is a topology on $X$. This completes the proof.
\end{proof}

If an IASL $f$ of a graph $G$ is an IASFL of $G$, then $f(V(G))$ contains only those subsets of $X$ consisting of the element $0$ and hence not all topological IASLs of $G$, with respect to $X$, can be the IASFLs of $G$. 

Figure \ref{G-TIASL1} depicts a topological IASI of a graph $G$ which is not an IASFL of $G$ with respect to the ground set $X=\{0,1,2,3\}$. 

\begin{figure}[h!]
\centering
\includegraphics[width=0.6\linewidth]{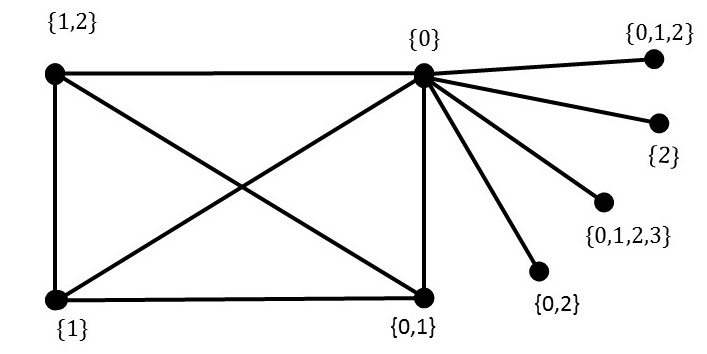}
\caption{An example to a TIASL of $G$ which is not an IASFL of $G$.}
\label{G-TIASL1}
\end{figure}

Another important type IASL of an given graph $G$ is a weak IASL of $G$, which is defined as follows.

\begin{defn}{\rm
\cite{GS1} A {\em weak integer additive set-labeling} (WIASL) of a graph $G$ is an IASL $f$ such that $|f^+(uv)| = \max(|f(u)|,|f(v)|)$ for all $u, v \in V(G)$.} 
\end{defn}

The following is a necessary and sufficient condition for an IASL to be a weak IASL of a given graph $G$.

\begin{lem}\label{L-WIASL}
{\rm \cite{GS1}} Let $f$ be an IASL defined on a given graph $G$. Then, $f$ is a WIASL of $G$ if and only if at least one end vertex of every edge of $G$ is mono-indexed.  
\end{lem}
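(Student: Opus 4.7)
The plan is to prove this via the standard lower bound on sumsets of finite integer sets, namely $|A+B|\geq |A|+|B|-1$ for any non-empty finite sets $A,B\subset\mathbb{N}_0$ (with equality precisely when both sets are arithmetic progressions sharing a common difference), combined with the trivial observation that $|A+B|=|B|$ whenever $|A|=1$.

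For the \emph{if} direction, suppose that every edge $uv$ of $G$ has at least one mono-indexed end vertex. Fix any edge $uv$; without loss of generality, let $|f(u)|=1$, say $f(u)=\{a\}$. Then $f^+(uv)=f(u)+f(v)=\{a+x : x\in f(v)\}$ is simply a translate of $f(v)$, hence $|f^+(uv)|=|f(v)|$. Since $|f(u)|=1\leq|f(v)|$, we have $|f(v)|=\max(|f(u)|,|f(v)|)$, so the WIASL condition holds on $uv$. As $uv$ was arbitrary, $f$ is a WIASL of $G$.

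For the \emph{only if} direction, I would argue by contraposition. Suppose there exists an edge $uv\in E(G)$ such that neither end vertex is mono-indexed, i.e.\ $|f(u)|\geq 2$ and $|f(v)|\geq 2$. Without loss of generality assume $|f(u)|\geq|f(v)|\geq 2$. Applying the sumset lower bound,
\[
|f^+(uv)| \;=\; |f(u)+f(v)| \;\geq\; |f(u)|+|f(v)|-1 \;\geq\; |f(u)|+1 \;>\; |f(u)| \;=\; \max(|f(u)|,|f(v)|).
\]
This strict inequality contradicts the assumption that $f$ is a WIASL of $G$. Hence every edge must possess a mono-indexed end vertex.

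The hard part is essentially nothing technical; the whole argument reduces to the sumset inequality $|A+B|\geq|A|+|B|-1$, which is the only nontrivial ingredient and is standard in additive number theory. The main point to be careful about is simply organising the two directions cleanly and making sure the translate-of-a-set argument in the \emph{if} direction is stated explicitly, since that is where the equality $|f^+(uv)|=\max(|f(u)|,|f(v)|)$ is achieved rather than merely bounded.
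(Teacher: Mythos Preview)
Your proof is correct. Note, however, that the paper does not actually prove this lemma; it is stated with a citation to \cite{GS1} as a known result, so there is no in-paper argument to compare against. Your route via the sumset lower bound $|A+B|\geq |A|+|B|-1$ is the standard one and is essentially what one would find in the cited source.
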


An interesting question in this context is whether an IASFL of a given graph $G$ can be a weak IASL. The following result provides an answer to this question.

\begin{prop}
An IASFL of a graph $G$ is a weak IASL of $G$ if and only if $G$ is a star.
\end{prop}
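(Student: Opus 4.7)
The plan is to combine Lemma~\ref{L-WIASL} (which characterises WIASLs by the presence of a mono-indexed endpoint on every edge) with Corollary~\ref{C-IASFL2} (which says that an IASF-graph has exactly one mono-indexed vertex, namely the one labelled $\{0\}$), and then handle the two directions separately.

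For the forward direction, suppose $f$ is simultaneously an IASFL and a WIASL of $G$. By Lemma~\ref{L-WIASL}, every edge of $G$ must have at least one mono-indexed endpoint. But Corollary~\ref{C-IASFL2} tells us that under an IASFL the only possible singleton set-label is $\{0\}$, and by Theorem~\ref{T-IASFL1} this set-label is realised by exactly one vertex $v_0$ of $G$. Therefore every edge of $G$ must be incident with $v_0$; equivalently, $v_0$ is a universal vertex and no edge exists among $V(G)\setminus\{v_0\}$. Since the paper's standing hypothesis rules out isolated vertices, $G$ is precisely a star with centre $v_0$.

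For the converse, assume $G$ is a star $K_{1,n}$. By the corollary preceding Corollary~\ref{C-IASFL2}, an IASFL on $G$ forces $|V(G)|=2^{|X|-1}$, so we may write $n=2^{|X|-1}-1$ for an appropriate ground set $X$ containing $0$. Define $f$ by assigning the label $\{0\}$ to the centre $v_0$ and bijecting the $n$ leaves with the remaining subsets of $X$ that contain $0$. By Theorem~\ref{T-IASFL1}, this $f$ is an IASFL of $G$. Moreover, the centre $v_0$ is mono-indexed and lies on every edge, so Lemma~\ref{L-WIASL} guarantees that $f$ is also a WIASL of $G$. (One can also check directly that $f^+(v_0v_i)=\{0\}+f(v_i)=f(v_i)$, so $|f^+(v_0v_i)|=|f(v_i)|=\max(|f(v_0)|,|f(v_i)|)$.)

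There is no real obstacle here: the argument is essentially a bookkeeping combination of two earlier characterisations. The only point that requires slight care is noticing that the forward direction rules out any edge between two non-centre vertices (since both endpoints would then have non-singleton labels), and that the converse direction needs an explicit labelling to witness existence — for which the principal filter generated by $\{0\}$ in $\mathcal{P}(X)$ works perfectly, matching the required vertex count $2^{|X|-1}$ of the star.
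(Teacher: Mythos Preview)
Your proof is correct and follows essentially the same approach as the paper: both directions hinge on Lemma~\ref{L-WIASL} and Corollary~\ref{C-IASFL2}, with the forward direction forcing all edges through the unique $\{0\}$-labelled vertex and the converse exhibiting the explicit IASFL that places $\{0\}$ at the centre of $K_{1,\,2^{|X|-1}-1}$. Your explicit appeal to the no-isolated-vertices convention and the direct verification that $f^+(v_0v_i)=f(v_i)$ are minor clarifications over the paper's version, but the argument is the same.
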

\begin{proof}
Let $G=K_{1,n}$, where $n=2^{|X|-1}-1$, $X$ being the ground set that is used for set-labeling and let $f$ be an IASFL defined on $G$. Then, label the vertex $v$ at the centre of $G$ by the set $\{0\}$ and other vertices by other subsets of $X$ containing $0$. Therefore, every edge of $G$ has one mono-indexed end vertex. Hence by Lemma \ref{L-WIASL}, $f$ is a weak IASI of $G$.

Conversely, assume that an IASFL $f$ of $G$ is a weak IASI of $G$. Then, by Lemma \ref{L-WIASL}, every edge of $G$ must have at least one mono-indexed end vertex. But by Corollary \ref{C-IASFL2}, the only singleton set-label in $\mathcal{F}$ is $\{0\}$. Therefore, the vertex,say $v$, having the set-label $\{0\}$ must be adjacent to all other vertices of $G$ and the graph $G-v$ is a trivial graph. Therefore, $G$ is a star. 
\end{proof}

\noindent Next, recall the definition of a strong IASL of a given graph $G$.

\begin{defn}{\rm
\cite{GS2} A {\em strong integer additive set-labeling} (SIASL) of $G$ is an IASL such that if $|f^+(uv)| = |f(u)|\,|f(v)|$ for all $u, v \in V(G)$.} 
\end{defn}

The {\em difference set} of a set $A$ is the set of all positive differences between the elements of $A$. The difference set of a set $A$ is denoted by $D_A$.
	
Then, the following result is a necessary and sufficient condition for an IASL (or IASI) to be a SIASL (or SIASI) of a given graph $G$.

\begin{lem}\label{L-SIASL}
{\rm \cite{GS2}} Let $f$ be an IASL defined on a given graph $G$. Then, $f$ is a SIASL of $G$ if and only if the difference sets of any two adjacent vertices of $G$ are disjoint.
\end{lem}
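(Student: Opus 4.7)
The plan is to reduce the statement to a purely combinatorial fact about sumsets of finite sets of non-negative integers. For any two non-empty finite sets $A,B\subset \mathbb{N}_0$, one has the obvious bound $|A+B|\le |A|\cdot|B|$, since every element of $A+B$ is of the form $a+b$ with $a\in A$, $b\in B$, and there are at most $|A|\cdot|B|$ such ordered pairs. The key observation is that equality holds precisely when the map $(a,b)\mapsto a+b$ is injective on $A\times B$. I would apply this to $A=f(u)$ and $B=f(v)$ for every edge $uv\in E(G)$: the SIASL condition $|f^+(uv)|=|f(u)|\cdot|f(v)|$ is equivalent to saying that all sums $a+b$ with $a\in f(u)$, $b\in f(v)$ are distinct.

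Next I would translate the injectivity condition into a statement about difference sets. Suppose $a_1+b_1=a_2+b_2$ for distinct pairs $(a_1,b_1),(a_2,b_2)\in f(u)\times f(v)$. Rearranging gives $a_1-a_2=b_2-b_1$, and (up to swapping the two pairs) we may assume $a_1>a_2$, hence $b_2>b_1$ as well; then the common positive value $d=a_1-a_2=b_2-b_1$ lies in $D_{f(u)}\cap D_{f(v)}$. Conversely, any positive integer $d\in D_{f(u)}\cap D_{f(v)}$ can be written as $d=a_1-a_2$ for some $a_1>a_2$ in $f(u)$ and as $d=b_2-b_1$ for some $b_2>b_1$ in $f(v)$, which immediately gives the collision $a_1+b_1=a_2+b_2$, forcing $|f^+(uv)|<|f(u)|\cdot|f(v)|$. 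So the existence of a sum-collision on the edge $uv$ is equivalent to $D_{f(u)}\cap D_{f(v)}\neq\emptyset$.

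Combining the two equivalences yields the lemma: $|f^+(uv)|=|f(u)|\cdot|f(v)|$ for every edge $uv$ if and only if $D_{f(u)}\cap D_{f(v)}=\emptyset$ for every edge $uv$. No step here looks like a serious obstacle; the whole argument is essentially the standard characterisation of when the Cauchy--Davenport-type upper bound on a sumset is attained. The only minor care needed is in handling the ``positive differences'' convention in the definition of $D_A$ (i.e.\ ensuring one orders the pair $(a_1,a_2)$ and the pair $(b_1,b_2)$ consistently), but this is a one-line observation rather than a real difficulty.
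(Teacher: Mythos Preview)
Your argument is correct: the equivalence between attaining the product bound $|A+B|=|A|\cdot|B|$ and the disjointness of the positive-difference sets $D_A$ and $D_B$ is exactly the right mechanism, and your handling of the ordering (forcing $a_1>a_2$ after noting that a collision with $a_1=a_2$ is impossible) closes the only subtle point.

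As for comparison with the paper: there is nothing to compare. The lemma is quoted in the paper as a known result from \cite{GS2} and is stated without proof; it is used only as input to the subsequent proposition. So your proposal supplies a self-contained proof where the paper simply cites one.
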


\noindent Can a given IASFL $f$ of a given graph $G$ be a strong IASL of $G$? We know that $f$ is a strong IASL of $G$ if the difference sets of the set-labels of any two adjacent vertices of $G$ are disjoint. Using this result, we wish to verify whether there is any relation between an IASFL and a strong IASL of $G$.

\noindent Invoking Lemma \ref{L-SIASL} and Theorem \ref{T-IASFL1}, we propose the following result.

\begin{prop}
If an IASFL $f$  of a graph $G$ is a strong IASL of $G$, then $f(u)\cap f(v) = \{0\}$, where $u$ and $v$ of $G$ are two adjacent vertices of $G$.
\end{prop}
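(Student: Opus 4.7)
The plan is to combine the structural information from Theorem \ref{T-IASFL1}, which forces $0$ to lie in every vertex set-label of an IASFL, with the disjointness of difference sets criterion supplied by Lemma \ref{L-SIASL}. The conclusion has two inclusions: showing $\{0\}\subseteq f(u)\cap f(v)$ is immediate from the IASFL hypothesis, and the main content is the reverse inclusion, which will be obtained by a short contradiction argument using the definition of the difference set.

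First I would fix two adjacent vertices $u,v$ in $G$. Applying Theorem \ref{T-IASFL1} to the IASFL $f$, both $f(u)$ and $f(v)$ contain the element $0$, so $0\in f(u)\cap f(v)$ and hence $\{0\}\subseteq f(u)\cap f(v)$. Next, for the reverse direction, I would suppose for contradiction that there is some non-zero $a\in f(u)\cap f(v)$. Since $0$ and $a$ are distinct elements of $f(u)$, the positive difference $a-0=a$ belongs to the difference set $D_{f(u)}$; by exactly the same reasoning, $a\in D_{f(v)}$. Therefore $a\in D_{f(u)}\cap D_{f(v)}$, so the two difference sets are not disjoint.

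The last step is to invoke Lemma \ref{L-SIASL}: since $f$ is a strong IASL of $G$ and $u,v$ are adjacent, the difference sets $D_{f(u)}$ and $D_{f(v)}$ must be disjoint, contradicting what we just derived. Hence no non-zero element of $X$ can lie in $f(u)\cap f(v)$, and combined with the first inclusion this gives $f(u)\cap f(v)=\{0\}$.

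There is no real obstacle here; the proposition is essentially a two-line consequence of two earlier results. The only thing to be careful about is to state explicitly that $0$ and the putative common non-zero element $a$ are distinct, so that $a$ genuinely qualifies as a positive difference contributing to $D_{f(u)}$ and to $D_{f(v)}$; otherwise the identification of a common element in the two difference sets would be vacuous.
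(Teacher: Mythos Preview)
Your proposal is correct and follows essentially the same route as the paper: invoke the IASFL condition to ensure $0$ lies in every vertex set-label, then derive a contradiction with Lemma~\ref{L-SIASL} by observing that any common non-zero element $a$ would appear as the difference $a-0$ in both $D_{f(u)}$ and $D_{f(v)}$. Your version is in fact a bit more explicit than the paper's in spelling out why $a$ lands in each difference set and in separating the two inclusions, but the argument is the same.
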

\begin{proof}
Assume that $f$ is an IASFL defined on a graph $G$. Let $u$ and $v$ be two adjacent vertices of $G$. Now, assume that $f$ is a strong IASL. Then, by Lemma \ref{L-SIASL}, $D_{f(v_i)}\cap D_{f(v_i)}=\emptyset$. If $f(u)$ and $f(v)$ have a common non-zero element, say $a$, then both $D_{f(v_i)}$ and $D_{f(v_i)}$ also contain the element $a$, contradicting the fact that $f$ is a strong IASL. Therefore, the set-labels of any two adjacent vertices have only one common element $0$. 
\end{proof}

It can be noted that the conditions $f(u)\cap f(v)=\{0\}$ and $D_{f(v_i)}\cap D_{f(v_i)}=\emptyset$, even together, do not produce the idea that every subset of $X$ containing $0$ is the set-label of some vertex of $G$. Therefore, every strong IASL of $G$ need not be an IASFL of $G$. 

Figure \ref{G-SIASL1} depicts a strong IASL of a graph $G$, with respect to the ground set $\{0,1,2,3\}$, which is not an IASFL of $G$.

\begin{figure}[h!]
\centering
\includegraphics[width=0.55\linewidth]{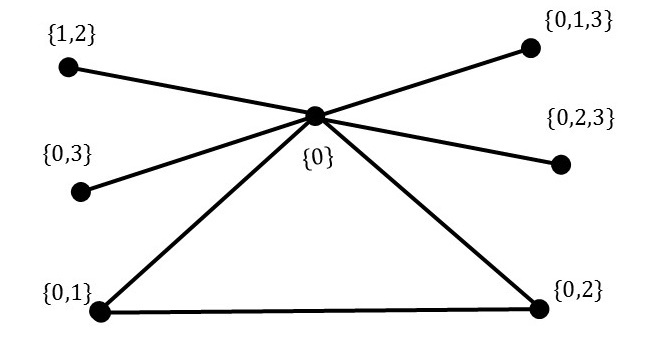}
\caption{An example to a  strong IASL of $G$ which is not an IASFL of $G$.}
\label{G-SIASL1}
\end{figure}  

Another type IASL which remains to be considered in this occasion is an arithmetic IASL of a graph $G$. An arithmetic IASL of a graph $G$ is an IASL $f$, with respect to which, the set-labels of all elements of $G$ are AP-sets. (An AP-set is a set whose elements are in an arithmetic progression). Since an AP-sets must have at least three elements, an IASFL of a graph $G$ can not be an Arithmetic IASL.

\section{Conclusion}
In this paper, we have introduced a new type of integer additive set-labeling and called an integer additive set-filtered labeling and have discussed certain characteristics and structural properties of graphs which admit this type of IASL. We have also discussed the relations, if any, with the other known types of IASLs. There are several other problems in this area are still open. The following are some of the problems we have identified in this area which need further investigation.

\begin{prob}
Determine a necessary and sufficient condition for an integer additive set-filtered labeling of a given graph $G$ to be an integer additive set-filtered indexer of $G$.
\end{prob}

\begin{prob}
Characterise the graphs which admit integer additive set-filtered indexers.
\end{prob}

\begin{prob}
Check the admissibility of IASFL by different operations and products of IASF-graphs.
\end{prob}

\begin{prob}
Check the admissibility of IASFL by the complement of IASF-graphs.
\end{prob}

\begin{prob}
Check the admissibility of IASFL by different certain graph classes.
\end{prob}

\begin{prob}
Check the admissibility of an induced IASFL by certain associated graphs such as line graphs, total graphs, subdivisions, homeomorphic graphs etc. of given IASF-graphs.
\end{prob}

An IASL (or IASI) is said to be {\em $k$-uniform} if $|f^+(e)| = k$ for all $e\in E(G)$. That is, a connected graph $G$ is said to have a $k$-uniform IASL (or IASI) if all of its edges have the same set-indexing number $k$. 

\begin{prob}
Determine the conditions required for an IASFL of a given graph to be a uniform IASFL.
\end{prob}

Studies on certain other types of integer additive set-labeling of graphs, both uniform and non-uniform, seem to be much promising. The integer additive set-labelings under which the vertices of a given graph are labeled by different standard sequences of non negative integers, are also worth studying. All these facts highlight a wide scope for further studies in this area.

\end{document}